\documentclass[a4paper]{article}
\usepackage{amsmath, amsfonts}
\usepackage{amssymb, latexsym}
\usepackage{amsthm}
\usepackage{mathtools}
\usepackage{comment}

\usepackage{tikz}
\usetikzlibrary{arrows.meta}
 \usetikzlibrary{patterns}

\usepackage{easy-todo}

\usepackage[shortlabels]{enumitem}
\setlist[enumerate]{leftmargin=*,align=left,labelindent=\parindent}

\usepackage[
colorlinks=true,       
linkcolor=blue,          
citecolor=blue,        
plainpages=false,      
pdfpagelabels,
]{hyperref}

\makeatletter
\def\widebreve{\mathpalette\wide@breve}
\def\wide@breve#1#2{\sbox\z@{$#1#2$}%
     \mathop{\vbox{\m@th\ialign{##\crcr
\kern0.08em\brevefill#1{0.8\wd\z@}\crcr\noalign{\nointerlineskip}%
                    $\hss#1#2\hss$\crcr}}}\limits}
\def\brevefill#1#2{$\m@th\sbox\tw@{$#1($}%
  \hss\resizebox{#2}{\wd\tw@}{\rotatebox[origin=c]{90}{\upshape(}}\hss$}
\makeatletter

\DeclareMathOperator{\imp}{\,\rightarrow\,}
\DeclareMathOperator{\defeqiv}{\stackrel{\textup{def}}{\iff}}
\DeclareMathOperator{\defeql}{\stackrel{\textup{def}}{\  =\  }}
\newcommand{\dotminus}{\mathbin{\ooalign{\hss\raise.6ex\hbox{$\cdot$}\hss\crcr$-$}}}

\newcommand{\ELZero}{\mathrm{EL_0}}
\newcommand{\EL}{\mathrm{EL}}

\newcommand{\RCA}{\mathrm{RCA_0}}

\newcommand{\BISH}{\mathrm{BISH}}
\newcommand{\qfAC}{\mathrm{QF\text{-}AC_{00}}}

\newcommand{\PiOneDJDC}{\mathrm{\Pi^{0}_{1}\text{-}DC^{\lor}}}
\newcommand{\SigmaOneAC}{\mathrm{\Sigma^{0}_{1}\text{-}AC_{00}}}

\newcommand{\FAN}{\mathrm{FAN}}

\newcommand{\WKL}{\mathrm{WKL}}
\newcommand{\WKLB}{\mathrm{WKL!}}

\newcommand{\LLPO}{\mathrm{LLPO}}

\newcommand{\MIN}{\mathrm{MIN}}
\newcommand{\MINB}{\mathrm{MIN!}}
\newcommand{\CIT}{\mathrm{CIT}}
\newcommand{\CITB}{\mathrm{CIT!}}
\newcommand{\POS}{\mathrm{POS}}

\newcommand{\Real}{\mathbb{R}}

\newcommand{\Nat}{\mathbb{N}}

\newcommand{\Cantor}{\Two^\Nat}
\newcommand{\Baire}{\Nat^\Nat}

\newcommand{\Bin}{\Two^\ast}

\newcommand{\Seq}{\Nat^\ast}
\newcommand{\Two}{\left\{0,1 \right\}}

\newcommand{\nil}{\langle \,\rangle}

\newcommand{\lth}[1]{\lvert #1 \rvert}
\newtheorem{theorem}{Theorem}[section]

\newtheorem{corollary}[theorem]{Corollary}

\theoremstyle{definition}

\theoremstyle{remark}

\numberwithin{equation}{section}

\title{The strength of the minimum principle in constructive reverse mathematics}

\author{Tatsuji Kawai\footnote{
  Email: \texttt{tatsuji.kawai@kochi-u.ac.jp}}\\
  \small
  Department of Information Science, Kochi University\\
  \small
  2-5-1 Akebono-cho, Kochi 780-8520 Japan
}
\date{}
\begin{document}
\maketitle
\begin{abstract}
The minimum principle states that every real-valued uniformly
continuous function $f \colon X \to \Real$ on a compact metric space
$X$ attains its minimum. Working in the base system $\ELZero$ of
constructive reverse mathematics, we show that the minimum principle
is equivalent to weak K\"{o}nig's lemma and to the Cantor intersection
theorem (i.e., the contrapositive of the Heine--Borel theorem).
Moreover, the above equivalence naturally extends
to the equivalence between the corresponding principles with unique
existence assumptions, which are then shown to be equivalent to the
Fan theorem for detachable bars and the positivity property for a
real-valued uniformly continuous function on a compact space.
These results sharpen the equivalence among these
principles obtained by Ishihara [Math.\ Log.\ Q. (1990), 237--240] 
and Berger and Ishihara [Math.\ Log.\ Q. (2005), 360--364]
in Bishop's constructive mathematics.
\medskip

\noindent\textsl{Keywords:}
constructive mathematics;
reverse mathematics;
weak K\"{o}nig's lemma;
minimum principle;
unique existence;
fan theorem
 \\[3pt]
\noindent\textsl{MSC2020:}
03B30; 
26E40; 
03F60; 
03F35; 
03F50 
\end{abstract}


\section{Introduction}
\label{sec:Introduction}
The \emph{minimum principle} states:
\begin{description}
  \item[$\MIN$:]
    Every real-valued uniformly continuous function
    $f \colon X \to \Real$ on a compact metric space $X$ attains its
    minimum: i.e., there exists $z \in X$ such that $f(z) =
    \inf f$.
\end{description}
Here, as in \cite[Chapter 4, Section 4]{Bishop-67}, a metric space is
compact if it is complete and totally bounded.
Ishihara \cite{IshiharaLLPOWKLHahn} showed that $\MIN$ is equivalent
to weak K\"{o}nig's lemma ($\WKL$) in Bishop's constructive
mathematics ($\BISH$) \cite{Bishop-67,Bishop-Bridges-85,VarietiesConstMath}.
In \cite{IshiharaLLPOWKLHahn}, the equivalence 
was established through a cycle of implications that involve the
\emph{lesser limited principle of omniscience} ($\LLPO$) as an
intermediate step. Although $\BISH$ is based on intuitionistic logic,
it allows arbitrary use of countable choice. Thus, it is not suitable
for analysing function existence axioms inherent in the theorem being
classified, since such axioms may already be available through
the unrestricted use of countable choice.
In particular, over the base system $\RCA$ of classical
reverse mathematics, $\LLPO$ is derivable whereas $\WKL$ is not. Since
the equivalence of $\MIN$ and $\WKL$ in \cite{IshiharaLLPOWKLHahn} is
mediated by $\LLPO$ and relies freely on the results established in
$\BISH$ \cite{Bishop-Bridges-85,VarietiesConstMath}, the question
remains as to whether this equivalence holds over a weaker
intuitionistic base system with only restricted forms of countable
choice and hence compatible with $\RCA$. 

In this paper, we show that $\MIN$ and $\WKL$ are equivalent over the
base system $\ELZero$ of constructive reverse mathematics
\cite{ConstRevMatheCompactness,IshiharaEtalBinaryExpansion}.
As a corollary, we obtain the equivalence between $\MIN$ and the
\emph{Cantor intersection theorem} ($\CIT$) \cite{BergerIshiharaFanUniqExistence},
which was previously shown to be equivalent over $\BISH$
\cite{IshiharaLLPOWKLHahn}. Since $\WKL$ decomposes into $\LLPO$ and
$\PiOneDJDC$ \cite[Corollary 28]{BergerIshiharaSchusterWKLFanDeMorganDependentChoice},%
\footnote{
  $\PiOneDJDC$ denotes \emph{Dependent Choice for disjunctions of
  $\Pi^{0}_{1}$ formulas}:
  \begin{align*}
    \PiOneDJDC : 
    \forall u \left( A_0(u) \lor  A_1(u) \right)
    \imp
    \exists \alpha \forall n  A_{\alpha(n)}(\overline{\alpha}n),
  \end{align*}
  where $A_i(u)$ is a formula of the form $\forall n B(u,n)$.
}
our result identifies not only the logical principle (i.e., $\LLPO$)
but also the underlying choice axiom of $\MIN$ and $\CIT$.

Moreover, we show that the equivalence between $\WKL$, $\MIN$, and
$\CIT$ naturally extends to the equivalence between the corresponding
principles with unique existence assumptions, which are then shown to
be equivalent to the Fan theorem for detachable bars and the
positivity property for a real-valued uniformly continuous function on
a compact space. The results sharpen the equivalence among these
principles obtained by Berger and Ishihara
\cite{BergerIshiharaFanUniqExistence} in $\BISH$.

\section{Formal base system}
We adopt $\ELZero$ \cite[Section 2]{IshiharaEtalBinaryExpansion}
as our base system for constructive reverse mathematics.
$\ELZero$ is a subsystem of elementary analysis $\EL$ \cite[3.6]{ConstMathI},
which is based on two-sorted intuitionistic logic (one
sort for natural numbers and the other for functions on natural
numbers). $\ELZero$ is obtained from $\EL$ by restricting the induction
scheme to quantifier-free formulas. The classical system $\RCA$
is obtained from $\ELZero$ by adding the law of excluded middle.  For
a detailed description of $\ELZero$, the reader is referred to
\cite[Section 2]{IshiharaEtalBinaryExpansion}. 
For the purpose of our investigation, however, some comments are in
order.

First, $\ELZero$ has only the \emph{quantifier-free axiom of choice}:
\begin{align}
  \label{qfAC}
  \qfAC: \forall m \exists n A(m,n) \imp \exists \alpha \forall m
  A(m,\alpha(m)),
\end{align}
where $m,n$ range over natural numbers and $\alpha$ ranges over
functions on natural numbers, and $A$ is a quantifier-free formula that
does not contain $\alpha$ as a free variable.
Using pairing function, one can derive the choice principle
$\SigmaOneAC$ in which the formula $A(m,n)$ in \eqref{qfAC} is of the
form $\exists k  B(m,n,k)$ where $B$ is quantifier free.

Second, since $\ELZero$ has only two sorts, a uniformly continuous
function on real numbers (or more generally Euclidean spaces) must be
defined in terms of a function on natural numbers
(see \cite[Section 5]{Ishihara2009Relativize} for the details of the encoding).
However, as in \cite{IshiharaEtalBinaryExpansion}, we work with
uniformly continuous functions on Euclidean spaces as if such higher
type objects are directly available in $\ELZero$.

\paragraph{Notations}
We follow the notation in \cite{ConstMathI,ConstMathII}.
The letters $i,j,k,l,m,n,\dots$ range over the set $\Nat$ of natural numbers,
and $\alpha,\beta,\dots$ range over the set $\Nat^{\Nat}$ of functions
on natural numbers.
Using a pairing function, one can code finite sequences of natural
numbers in $\ELZero$. The set of finite sequences of natural numbers
is denoted by $\Seq$, the set of binary sequences is denoted by
$\Bin$, and letters $a,b,c,\dots$ range over $\Seq$.
Also, $\nil$
denotes the empty sequence, and $\langle i_0, \dots, i_{n-1} \rangle$
denotes a finite sequence of length $n$. 
The concatenation of finite
sequences $a$ and $b$ is denoted by $a*b$, the length of $a$ is
denoted by $|a|$, for $i < |a|$, the $i$-th component of $a$ is
denoted by $a_i$, and $a \preceq b$ means that $a$ is an initial
segment of $b$. For an infinite sequence $\alpha$,
$\overline{\alpha}n$ denotes the initial segment of $\alpha$ of
length $n$,
namely $\langle \alpha(0),\dots,\alpha(n-1)\rangle$.
We also define $\overline a n = \langle a_{0},\dots,a_{n-1} \rangle$
for a finite sequence $a$ whenever $n \leq \lth{a}$.
For a finite sequence $a$, $\widehat{a}$ denotes the infinite sequence
where $a$ is followed by an infinite sequence of $0$s.
For a finite sequence $a$ and an infinite sequence $\alpha$, $\alpha
\in a$ means that $a$ is an initial segment of $\alpha$.

\section{Minimum principle}
\label{sec:MIN}
First, we recall some notions related to $\WKL$.
A \emph{tree} is an inhabited and detachable subset%
\footnote{We identify a detachable subset with its characteristic
function.} $T \subseteq \Seq$ such that 
$a \in T \land b \preceq a \imp b \in T$
for all $a, b \in \Seq$. Let $T$ be a tree:
\begin{itemize}
  \item $T$ is \emph{binary} if $T \subseteq \Bin$.

  \item $T$ is \emph{infinite} if $\forall n \in \Nat \exists a
    \in \Seq \left( |a| = n \land a \in T \right)$.

  \item An \emph{infinite path} of $T$ is a sequence 
    $\alpha \in \Baire$ such that
    $\forall n \in \Nat \; \overline{\alpha}n \in T$.
\end{itemize}
\emph{Weak K\"{o}nig's lemma} ($\WKL$) is the following principle:
\begin{description}
  \item[$\WKL$:]
    Every infinite binary tree has an infinite path.
\end{description}
\begin{theorem}
  \label{thm:WKL_MIN}
  The following are equivalent in $\ELZero$.
  \begin{enumerate}
    \item\label{MIN} $\MIN$,
    \item\label{WKL} $\WKL$.
  \end{enumerate}
\end{theorem}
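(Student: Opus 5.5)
We prove both directions directly in $\ELZero$, avoiding $\LLPO$ as an intermediate step and using only $\qfAC$ (and hence $\SigmaOneAC$) for choice.

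\emph{$\MIN \imp \WKL$.} Let $T$ be an infinite binary tree. We work on the compact space $\Cantor$ with the usual metric $d(\alpha,\beta)=2^{-\min\{n : \alpha(n)\neq\beta(n)\}}$; it is complete and totally bounded, since the finite sets $\{\widehat{a} : a\in\Bin, \lth{a}=n\}$ are $2^{-n}$-nets. Define
\[
  f(\alpha) \;=\; \sum_{n} 2^{-n}\,\chi(\overline{\alpha}n),
\]
where $\chi(a)=0$ if $a\in T$ and $\chi(a)=1$ otherwise. The value of $f(\alpha)$ up to $2^{-n}$ depends only on $\overline{\alpha}(n+1)$, so $f$ is uniformly continuous with an explicit modulus, and it can be coded in $\ELZero$ as a function on $\Nat$.

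Since $T$ is infinite, for each $n$ there is $a\in T$ with $\lth{a}=n$. Then every initial segment of $a$ lies in $T$, so $f(\widehat{a})\le 2^{-n+1}$. Hence $\inf f=0$, and this infimum exists explicitly, with no appeal to a general existence theorem for infima.

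By $\MIN$ there is $\alpha$ with $f(\alpha)=0$. Suppose $\overline{\alpha}n\notin T$ for some $n$. Then $f(\alpha)\ge 2^{-n}>0$, a contradiction. So $\neg\,\overline{\alpha}n\notin T$, and because $T$ is detachable this gives $\overline{\alpha}n\in T$. Thus $\alpha$ is an infinite path of $T$.

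\emph{$\WKL \imp \MIN$.} This direction is the main obstacle, for two reasons. The infimum of $f$ must first be shown to exist, and the compact space $X$ has to be reduced to Cantor space using only $\qfAC$.

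\begin{enumerate}
\item Reduce $X$ to Cantor space. Fix $\varepsilon_k$-nets $x^k_0,\dots,x^k_{N_k}$ of $X$ with $\varepsilon_k=2^{-k}$. Encode the possible choices of net points, one at each level, as binary sequences by using a fixed number of bits per level. Consider the detachable set of finite choice-sequences whose consecutive net points are close enough (say $d(x^k_{i},x^{k+1}_{j})<2^{-k+1}$, which is decidable up to a rational approximation). Its paths $\alpha$ determine Cauchy sequences, hence points $\phi(\alpha)\in X$. Every $x\in X$ is approximated along some path, and the map $\phi$ is uniformly continuous. Replacing $f$ by $f\circ\phi$, we may assume $X$ is (a closed, tree-presented part of) $\Cantor$.

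\item Show that $\inf f$ exists. For each $n$, compute the rational $r_n=\min_{\lth{a}=m_n} f(\widehat{a})$ over a finite level, where $m_n$ comes from the modulus of $f$. Then $(r_n)$ is Cauchy and its limit $c$ is $\inf f$; this step needs no choice beyond what is definable.

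\item Build the tree of near-minimizers. Let $T$ consist of those $a$ for which some extension $b$ of $a$ at level $m_{\lth{a}}$ satisfies the rational test $f(\widehat{b})<c_{\lth{a}}+2^{-\lth{a}+2}$. This is decidable because it involves only finitely many rational comparisons, and it is closed under initial segments once the thresholds are set to be monotone. $T$ is infinite because the minimizing $a$ at each level belongs to it.

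\item Apply $\WKL$ to obtain a path $\alpha$, and check that $f(\alpha)\le c+2^{-n+3}$ for all $n$, so that $f(\alpha)=c$.
\end{enumerate}

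The delicate points are keeping every test decidable, using rational approximations with slack so that the tree is downward closed, and ensuring that all sequence constructions use only quantifier-free choice.
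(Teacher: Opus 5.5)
Your $\MIN\imp\WKL$ direction is the paper's argument. For $\WKL\imp\MIN$ you build the tree differently. The paper normalizes $\inf f=0$, taking existence of infima on compact subspaces for granted. It then uses $\SigmaOneAC$ to get $\lambda\colon\Bin\to\Two$ with $\lambda(a)=0\imp\inf f|_{\widehat{T_a}}<2^{-\lth{a}}$ and $\lambda(a)=1\imp\inf f|_{\widehat{T_a}}>2^{-(\lth{a}+1)}$, and sets $T=\{a\mid\lambda(a)=0\}$. Downward closure is then immediate from $\inf f|_{\widehat{T_a}}=\min\{\inf f|_{\widehat{T_{a*\langle 0\rangle}}},\inf f|_{\widehat{T_{a*\langle 1\rangle}}}\}$.

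You instead compute $c=\inf f$ explicitly and define $T$ by a bounded search over extensions at level $m_{\lth{a}}$ against rational approximations. So $T$ is primitive recursive in the code of $f$ and this step uses no choice at all; the price is error bookkeeping. The bookkeeping works, but not merely because the thresholds are ``monotone.'' Suppose $a*\langle i\rangle\in T$ via $b$ with $\lth{b}=m_{n+1}$, where $n=\lth{a}$. Truncating to $\overline{b}\,m_n$ costs $2^{-n}$ by the modulus. The threshold drop $2^{-n+2}-2^{-n+1}=2^{-n+1}$ absorbs this plus the approximation errors, provided $f(\widehat{b})$ and $c$ are approximated to within, say, $2^{-n-3}$. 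Since $f(\widehat{b})$ is a real, your ``rational test'' must be applied to a rational approximation supplied by the code of $f$.

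In step 4, the witness for $\overline{\alpha}n\in T$ agrees with $\alpha$ only on $n$ bits. You must therefore read off the bound at the nodes $\overline{\alpha}\,\omega(k)$, as the paper does with $\lambda(\overline{\alpha}\omega(n))=0$; this gives $f(\alpha)<c+2^{-k+3}$.

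Step 1 needs repair. Your construction gives a uniformly continuous $\phi$ from the infinite paths of a detachable finitely branching tree $S$ into $X$. After binary coding, $S$ must also exclude the unused bit patterns, so its paths form only a closed subset of $\Cantor$. Steps 2--4, however, evaluate $f(\widehat{a})$ for arbitrary $a\in\Bin$ and so need the function on all of $\Cantor$. To bridge this, first note that every node of $S$ has an immediate successor in $S$: each level is a finer net, so your approximate closeness test succeeds for some successor. Then compose with the uniformly continuous retraction of $\Cantor$ onto the paths of $S$ that follows $\alpha$ while it stays in $S$ and otherwise takes the least admissible successor. This is exactly the second half of the reduction the paper cites from the literature. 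Alternatively, run steps 2--4 inside $S$, replacing $\widehat{a}$ by the leftmost path of $S$ through $a$ and applying $\WKL$ to a subtree of $S$.
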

\begin{proof}
  $\MIN \Rightarrow \WKL$:
  Assume $\MIN$, and let $T$ be an infinite binary tree.
  For each $k \in \Nat$, let
  \begin{equation}
    \label{def:Tk}
    T_k =  \left\{ a \in T \mid |a| = k \right\},
  \end{equation}
  and let $\chi_k$ be the characteristic function
  of $\Bin \backslash T_k$.
  Define a uniformly continuous function $f \colon \Cantor \to \Real$ by 
  \[
    f(\alpha) = \sum_{i = 0}^{\infty}
    2^{-i} \chi_i(\overline{\alpha}i).
  \]
  Clearly, $f(\alpha) = 0$ if and only if $\alpha$ is an infinite path
  of $T$. Since $T$ is infinite, we have $\inf f = 0$; hence,
  by $\MIN$, there exists $\alpha$ such that
  $f(\alpha) = 0$. Then $\alpha$ is an infinite path of $T$.
  \medskip

  $\WKL \Rightarrow \MIN$:
  Assume $\WKL$. Since every compact metric space can be represented as a
  uniformly continuous image of the Cantor space
  $\Cantor$~\cite[7.4.4]{ConstMathII},%
  \footnote{
    Specifically, each compact metric space is a uniformly continuous
    image of a finitely branching spread with an explicit height-wise
    bounding function \cite[7.4.3]{ConstMathII}, which in turn is a
    continuous image of $\Cantor$ \cite[4.7.5]{ConstMathI}
    (see also \cite{FujiwaraDFT}).
    }
  it is sufficient to prove the
  version of $\MIN$ for real-valued functions on the Cantor space
  whose infimum is $0$. So let $f \colon \Cantor \to \Real$ be a
  uniformly continuous function with modulus $\omega \colon \Nat \to
  \Nat$ of uniform continuity. Without loss of generality, we assume
  that $n \leq \omega(n) \leq \omega(n+1)$ for each $n \in \Nat$ and
  that $\inf f = 0$.

  For each $a \in \Bin$, define
  \begin{align*}
    T_a &\defeql \left\{ a*b \mid b \in \Bin  \right\}, 
    \\
    \widehat{T_a} &\defeql \left\{ \alpha \in \Cantor \mid \alpha \in a
    \right\}.
  \end{align*}
  Note that $\widehat{T_a}$ is a compact subspace of $\Cantor$, so
  the restriction $f|_{\widehat{T_a}}$ of $f$ to $\widehat{T_a}$ has an
  infimum. 
  By $\SigmaOneAC$, we have a function $\lambda \colon \Bin
  \to \Two$ such that
  \begin{equation}
    \label{def:lambda}
    \begin{aligned}
      \lambda(a) = 0 &\imp \inf f|_{\widehat{T_a}} < 2^{-|a|}, \\
      \lambda(a) = 1 &\imp \inf f|_{\widehat{T_a}} > 2^{-(|a| + 1)}.
    \end{aligned}
  \end{equation}
  Define $T \subseteq \Bin$ by
  \begin{align}
    \label{T:WKL-MIN}
    T \defeql \left\{ a \in \Bin \mid \lambda(a) = 0 \right\}.
  \end{align}
  We show that $T$ is an infinite binary tree.
  First, since $\inf f|_{\widehat{T_{\nil}}} = \inf f = 0 < 1 = 2^{0}$,
  we have $\nil \in T$.
  Second, let $a \in \Bin$ and $i \in \Two$, and suppose 
  $a * \langle i \rangle \in T$. Then,
  $\inf f|_{\widehat{T_{a * \langle i \rangle}}} <  2^{-(|a| + 1)}$.
  Since $T_a = T_{a*\langle 0 \rangle} \cup T_{a*\langle 1 \rangle}$,
  we have
  \[
    \inf f|_{\widehat{T_{a}}}
    =
    \min\left\{ 
      \inf f|_{\widehat{T_{a * \langle 0 \rangle}}},
      \inf f|_{\widehat{T_{a * \langle 1 \rangle}}}
    \right\}
    \leq
    \inf f|_{\widehat{T_{a * \langle i \rangle}}}
    < 2^{-(|a| + 1)}.
  \]
  Thus $\lambda(a) = 1$ is impossible. Hence $\lambda(a) = 0$,
  and so $a \in T$. Thus $T$ is a binary tree.
  To see that $T$ is infinite, fix $n \in \Nat$, and 
  suppose that $\lambda(a) = 1$ for all $a \in \Bin$ such that $|a| =
  n$. Since
  $\Bin = \bigcup\left\{ T_a \mid |a| = n \right\}$,
  we have 
  \[
    0 = \inf f = \min \left\{ \inf f|_{\widehat{T_{a}}} \mid |a| = n \right\}
    >  2^{-(n+1)},
  \]
  a contradiction. Thus there exists $a \in \Bin$ such that $|a|=n$
  and $\lambda(a) = 0$, i.e., $a \in T$. Hence $T$ is infinite.

  By $\WKL$, $T$ has an infinite path $\alpha \in \Cantor$. Fix $n \in
  \Nat$. Since $\lambda(\overline{\alpha}\omega(n)) = 0$, we have
  \[
    \inf f|_{\widehat{T_{\overline{\alpha}\omega(n)}}} < 2^{-\omega(n)}
    \leq 2^{-n}.
  \]
  Thus there exists $\beta \in \overline{\alpha}\omega(n)$ such 
  that $f(\beta) < 2^{-n}$.
  Since $\omega$ is a modulus of $f$, 
  we have $|f(\overline{\alpha}\omega(n)) - f(\beta)| < 2^{-n}$.
  Thus 
  \[
    f(\overline{\alpha}\omega(n)) < 2^{-n} + f(\beta) < 2^{-n} +
    2^{-n} = 2^{-(n-1)}.
  \]
  Hence, the sequence $\langle
  f(\overline{\alpha}\omega(n))
  \rangle_{n \in \Nat}$ converges to $0$. Since
  $\langle \overline{\alpha}\omega(n) \rangle_{n \in \Nat}$ converges
  to $\alpha$, we have $f(\alpha) = 0$ by the continuity of $f$.
\end{proof}

\section{Cantor intersection theorem}
\label{sec:CIT}
Working in $\BISH$, Ishihara \cite{IshiharaLLPOWKLHahn} showed that
the minimum principle is equivalent to the \emph{Cantor intersection
theorem}($\CIT$) \cite{BergerIshiharaFanUniqExistence}:%
\footnote{
  In \cite{IshiharaLLPOWKLHahn},
  the Cantor intersection theorem is called 
  the \emph{pseudo Heine--Borel principle}.
  Classically, this statement is the contrapositive of the Heine--Borel
  theorem.
}
\begin{description}
  \item[$\CIT$:]
    Every decreasing sequence $\langle F_i \rangle_{i \in \Nat}$ of
    inhabited, closed, and located subsets of a compact metric space
    has a point in common, i.e., $\bigcap_{i \in \Nat} F_i$ is
    inhabited.
\end{description}
As a corollary of Theorem \ref{thm:WKL_MIN},
we obtain the equivalence between $\MIN$ and $\CIT$
over $\ELZero$.
\begin{theorem}
  \label{thm:MIN_CIT}
  The following are equivalent over $\ELZero$:
  \begin{enumerate}
    \item $\MIN$,
    \item $\CIT$.
  \end{enumerate}
\end{theorem}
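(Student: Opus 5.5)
We prove both implications, using Theorem~\ref{thm:WKL_MIN} to replace $\MIN$ by $\WKL$ wherever that is more convenient.

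\textbf{$\CIT \Rightarrow \MIN$.} Let $f\colon X\to\Real$ be uniformly continuous on a compact metric space $X$. Since $X$ is compact, $\inf f$ exists. For each $i$, the plan is to define
\[
F_i = \left\{ x \in X \mid f(x) \le \inf f + 2^{-i} \right\}.
\]
Each $F_i$ is closed and inhabited. However, sublevel sets of a uniformly continuous function need not be located constructively, so these sets cannot be used directly. This is the main obstacle.

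The standard remedy is to use the fact that for all but countably many $c > \inf f$, the set $\{x \mid f(x)\le c\}$ is compact, hence located (Bishop). That argument, however, relies on countable choice beyond $\SigmaOneAC$. To stay within $\ELZero$, I would instead reduce to the Cantor space, as in the proof of Theorem~\ref{thm:WKL_MIN}. As noted there, it suffices to find a zero of a uniformly continuous $f\colon\Cantor\to\Real$ with $\inf f=0$. Alternatively, it suffices to derive $\WKL$ from $\CIT$ and then apply Theorem~\ref{thm:WKL_MIN}; I would take this route.

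So let $T$ be an infinite binary tree, and put
\[
F_k=\left\{\alpha\in\Cantor \mid \overline{\alpha}k\in T\right\}.
\]
Then:
\begin{itemize}
\item $F_k$ is a finite union of basic clopen sets $\widehat{T_a}$ with $|a|=k$, $a\in T$. This union is nonempty because $T$ is infinite, and the relevant $a$ can be found decidably.
\item $F_k$ is therefore closed, inhabited (by $\widehat{a}$), and located: the distance from any $\beta$ to $F_k$ is computable, since it depends only on $\overline{\beta}k$ and the finitely many $a\in T$ of length $k$.
\item The sequence is decreasing, since $T$ is closed under initial segments.
\end{itemize}
By $\CIT$, some $\alpha$ lies in every $F_k$, and such an $\alpha$ is an infinite path of $T$. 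Hence $\WKL$ holds, and so $\MIN$ holds by Theorem~\ref{thm:WKL_MIN}.

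\textbf{$\MIN \Rightarrow \CIT$.} Let $\langle F_i\rangle$ be a decreasing sequence of inhabited, closed, located subsets of a compact $X$. Define
\[
f(x)=\sum_{i} 2^{-i}\min\{1,d(x,F_i)\}.
\]
Each term is $1$-Lipschitz and the series converges uniformly, so $f$ is uniformly continuous.

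To see that $\inf f = 0$, pick $x_n\in F_n$. Since the sets are decreasing, $d(x_n,F_i)=0$ for all $i\le n$, so $f(x_n)\le \sum_{i>n}2^{-i}=2^{-n}$. These points require only one choice per $n$; alternatively, note that $\inf f$ exists by compactness, and the bound $\inf f \le 2^{-n}$ for each $n$ follows from the mere inhabitedness of $F_n$.

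By $\MIN$ there is $z$ with $f(z)=0$. Then $d(z,F_i)=0$ for every $i$, and since each $F_i$ is closed, $z\in F_i$. Hence $z \in \bigcap_i F_i$, which proves $\CIT$.

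The only delicate point in this direction is making sure the encoding of $f$ is available in $\ELZero$. This goes as in Section~\ref{sec:MIN}, using the given locatedness data for the $F_i$ and a code for the metric.
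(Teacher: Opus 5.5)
Your proof is correct and follows the paper's argument. For $\CIT \Rightarrow \MIN$ you derive $\WKL$ from the sets $F_k = \{\alpha \in \Cantor \mid \overline{\alpha}k \in T\}$ and invoke Theorem~\ref{thm:WKL_MIN}, and for $\MIN \Rightarrow \CIT$ you find a zero of $\sum_i 2^{-i} d(x,F_i)$, exactly as the paper does; your truncation $\min\{1,d(x,F_i)\}$ is a harmless variant, since the untruncated series already converges on a bounded compact space, and the preliminary discussion of sublevel sets is not needed.
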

\begin{proof}
  $\MIN \Rightarrow \CIT$:
  The proof in \cite{IshiharaLLPOWKLHahn} carries over to $\ELZero$.
  We repeat the proof for the sake of completeness.
  Assume $\MIN$, and let $\langle F_i \rangle_{i \in \Nat}$ be a
  decreasing sequence of inhabited, closed, and  located subsets of a
  compact metric space $X$. Define a function $f \colon X \to
  \Real$ by 
  \begin{align}
    \label{f:MIN-CIT}
    f(x) = \sum_{i = 0}^{\infty} 2^{-i} d(x,F_i),
  \end{align}
  which is clearly uniformly continuous. Since
  $\langle F_i \rangle_{i \in \Nat}$ is decreasing and each $F_i$ is
  inhabited, we have $\inf f = 0$. Hence, by $\MIN$, there exists $x
  \in X$ such that $f(x) = 0$, so that $d(x,F_{i}) = 0$ for all $i \in
  \Nat$.  Since $F_i$ is closed, we have $x \in \bigcap_{i \in \Nat}
  F_i$.
  \medskip

  $\CIT \Rightarrow \MIN$:
  By Theorem \ref{thm:WKL_MIN}, it suffices to show that $\CIT$ implies
  $\WKL$. Let $T$ be an infinite binary tree. For each $i \in \Nat$,
  define
  \begin{align}
    \label{F:CIT-MIN}
    F_i = \left\{ \alpha \in \Cantor 
    \mid \overline{\alpha}i \in T \right\}.
  \end{align}
  Then, $F_i$ is a closed and located subset of $\Cantor$, which
  is inhabited since $T$ is infinite.
  Since $T$ is a tree, the sequence $\langle F_{i} \rangle_{i \in
  \Nat}$ is decreasing.
  Thus, by $\CIT$, there exists $\alpha$ such that $\alpha \in \bigcap_{i
  \in \Nat} F_i$, i.e., $\overline{\alpha}n \in T$ for all $n \in
  \Nat$. Hence, $T$ has an infinite path.
\end{proof}

As a corollary, we obtain the contrapositive
of the equivalence between the Fan theorem
and the Heine--Borel covering theorem \cite[Theorem 5.4]{Loeb05}.
\begin{corollary}
  \label{cor:CIT_WKL}
  The following are equivalent over $\ELZero$:
  \begin{enumerate}
    \item $\CIT$,
    \item $\WKL$.
  \end{enumerate}
\end{corollary}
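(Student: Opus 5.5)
The corollary follows by chaining the two equivalences already established over $\ELZero$. Theorem~\ref{thm:WKL_MIN} gives $\MIN \Leftrightarrow \WKL$, and Theorem~\ref{thm:MIN_CIT} gives $\MIN \Leftrightarrow \CIT$. Composing them yields $\CIT \Leftrightarrow \WKL$ over $\ELZero$, and no additional use of choice is needed beyond what those proofs already use.

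The two directions can also be read off more directly, which shows that the detour through $\MIN$ is only needed one way. For $\CIT \Rightarrow \WKL$, the argument in the second half of the proof of Theorem~\ref{thm:MIN_CIT} already does the job without mentioning $\MIN$. Given an infinite binary tree $T$, take the sets $F_i$ of \eqref{F:CIT-MIN}. These are closed, located, inhabited and decreasing subsets of $\Cantor$, so $\CIT$ gives a common point $\alpha$, and $\alpha$ is an infinite path of $T$. For $\WKL \Rightarrow \CIT$, I would go through $\MIN$. First apply Theorem~\ref{thm:WKL_MIN} to get $\MIN$ from $\WKL$. Then apply the $\MIN \Rightarrow \CIT$ argument using the function $f$ of \eqref{f:MIN-CIT}, whose minimum point lies in every $F_i$.

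There is no real obstacle here. The only point to check is that every implication cited was proved over $\ELZero$ rather than $\BISH$, and the statements of both theorems say so explicitly.
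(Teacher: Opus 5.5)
Your proposal is correct and matches the paper's proof, which simply combines Theorem~\ref{thm:WKL_MIN} and Theorem~\ref{thm:MIN_CIT}. Your remark that $\CIT \Rightarrow \WKL$ is in fact proved directly inside the proof of Theorem~\ref{thm:MIN_CIT}, while $\WKL \Rightarrow \CIT$ passes through $\MIN$, accurately reflects how the paper establishes those theorems.
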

\begin{proof}
  By Theorem \ref{thm:WKL_MIN}
  and Theorem \ref{thm:MIN_CIT}.
\end{proof} 

\section{Unique existence}
\label{sec:UniquExistence}
Working in $\BISH$, Berger and Ishihara
\cite{BergerIshiharaFanUniqExistence} showed that the variants of
$\WKL$, $\MIN$, and $\CIT$ with a unique existence assumption, called
$\WKL!$, $\MIN!$, and $\CIT!$ respectively, are mutually equivalent.
Moreover, they showed that these principles are all equivalent to the
\emph{Fan theorem} ($\FAN$) (i.e., a contrapositive of $\WKL$
\cite{IshiharaWKLImpDFan}) and the \emph{positivity property} ($\POS$)
of a real-valued uniformly continuous function on a compact space.

We derive the equivalence between $\WKL!$, $\MIN!$, and $\CIT!$ in
$\ELZero$ as a corollary of the results in Section~\ref{sec:MIN} and
Section~\ref{sec:CIT}.
For the sake of completeness, we also show that
$\WKL!$, $\MIN!$, and $\CIT!$ are all equivalent to 
$\FAN$ and $\POS$.

We first recall the precise formulations of the relevant principles.
\begin{enumerate}
  \item A unique version of the Cantor intersection theorem:
    \begin{description}
      \item[$\CITB$:]
        Every decreasing sequence $\langle F_i \rangle_{i \in \Nat}$
        of inhabited, closed, and located subsets of a compact metric
        space having at most one point in common has a point in common,
        i.e., $\bigcap_{i \in \Nat} F_i$ is inhabited.

    \end{description}
    Here, a decreasing sequence of subsets \emph{has at most one point in common}
    if
    \[
      x \neq y \imp \exists i \left( d(x,F_i) > 0 \lor d(y,F_i) > 0 \right).
    \]

  \item A unique version of the minimum principle:
    \begin{description}
      \item[$\MINB$:]
        Every real-valued uniformly continuous function
        $f \colon X \to \Real$ on a compact metric space $X$ having 
        at most one minimum point attains its minimum: i.e., there
        exists $z \in X$ such that $f(z) = \inf f$.
    \end{description}
    Here, a function \emph{has at most one minimum point}
    if
    \[
      x \neq y \imp  \left( f(x) > \inf f \lor f(y) > \inf f \right).
    \]

  \item A unique version of weak K\"onig lemma:
    \begin{description}
      \item[$\WKLB$:]
        Every infinite binary tree having at most one infinite path has an infinite path.
    \end{description}
    Here, a tree \emph{has at most one infinite path} if
    \[
      \alpha \neq \beta \imp \exists n \left( \overline{\alpha}n \notin T \lor \overline{\beta}n \notin T  \right).
    \]

  \item The Fan theorem:
    \begin{description}
      \item[$\FAN$:] Each detachable bar is uniform.
    \end{description}
    Here, a detachable subset%
    \footnote{A detachable subset is a subset defined by its characteristic
    function.} 
    $B \subseteq \Bin$ is a \emph{bar} if
    \[
      \forall \alpha \in \Cantor \exists n \left( \overline{\alpha}n
      \in B \right).
    \]
    A bar is \emph{uniform} if 
    \[
      \exists N \forall \alpha \in \Cantor \exists n \leq N \left( \overline{\alpha}n
      \in B \right).
    \]

  \item The positivity property:
    \begin{description}
      \item[$\POS$:] Each positive valued uniformly continuous
        function $f \colon X \to \Real$ on a compact metric space $X$
        has a positive infimum.
    \end{description}
\end{enumerate}

\begin{theorem}
  \label{thm:UniqueExistence}
  The principles $\WKL!$, $\MIN!$, $\CIT!$, $\FAN$, and $\POS$
  are all equivalent over $\ELZero$.
\end{theorem}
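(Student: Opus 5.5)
We plan to prove the cycle of implications
\[
\MINB \Rightarrow \CITB \Rightarrow \WKLB \Rightarrow \FAN \Rightarrow \POS \Rightarrow \MINB .
\]
The first two steps reuse the constructions of Theorem~\ref{thm:MIN_CIT} and check that they preserve the ``at most one'' hypotheses.

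For $\MINB \Rightarrow \CITB$, take $f$ as in \eqref{f:MIN-CIT}. As before, $\inf f = 0$. Suppose $x \neq y$. Since the $F_i$ have at most one common point, there is $i$ with $d(x,F_i)>0$ or $d(y,F_i)>0$. In either case $f(x) \geq 2^{-i}d(x,F_i) > 0$ or $f(y)>0$. Hence $f$ has at most one minimum point, and $\MINB$ yields a common point.

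For $\CITB \Rightarrow \WKLB$, take $F_i$ as in \eqref{F:CIT-MIN}. Suppose $\alpha \neq \beta$. Since $T$ has at most one infinite path, there is $n$ with $\overline{\alpha}n\notin T$ or $\overline{\beta}n \notin T$. If $\overline{\alpha}n \notin T$, then every $\gamma \in F_n$ differs from $\alpha$ below $n$, so $d(\alpha,F_n) \geq 2^{-n}>0$ in the standard metric on $\Cantor$. The case for $\beta$ is symmetric.

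For $\WKLB \Rightarrow \FAN$, let $B$ be a detachable bar. We may assume $B$ is closed under extensions. Let
\[
T = \{ a \in \Bin \mid \forall b \preceq a\, (b \notin B)\},
\]
and modify it so that it always has a path. Define $T'$ to consist of the sequences $a$ such that either $a \in T$, or $a = c*\langle 1\rangle * 0^k$ with $c*\langle 0\rangle$ having ``died'' in a specified way. Rather than this, the cleaner route is Berger and Ishihara's: form the tree of sequences $0^k$ and $0^k*\langle 1\rangle * a$ with $a \in T$ and $\lth{a}\leq$ the first level at which $T$ becomes empty (computable by qf-induction relative to $k$). This tree is infinite. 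Its only possible paths are $0^\omega$, or a path through $T$, which is excluded because $B$ is a bar. Two distinct paths are then separated using the bar property: $T$ itself has no infinite path, so any $\alpha\neq\beta$ leaves the tree on one side. Applying $\WKLB$ gives a path. Detecting where the path branches off $0^\omega$ gives an $N$ at which $T$ is finite, i.e.\ a uniform bound for $B$. This is the main obstacle: arranging a tree with at most one path whose path encodes the height bound, using only quantifier-free induction and $\qfAC$. I would follow the Berger--Ishihara construction and check that each definition is primitive recursive in $B$. The step ``detecting'' the bound may alternatively go through the contradiction argument combined with decidability of the finite levels of $T$.

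For $\FAN \Rightarrow \POS$, use the representation of $X$ as a uniformly continuous image of $\Cantor$ \cite[7.4.4]{ConstMathII}, as in the proof of Theorem~\ref{thm:WKL_MIN}. This reduces the claim to positive uniformly continuous $f$ on $\Cantor$ with modulus $\omega$. By $\SigmaOneAC$, choose $\lambda(a)\in\Two$ so that $\lambda(a)=1$ implies $f(\widehat{a}) > 2^{-\lth{a}+1}$ and $\lambda(a)=0$ implies $f(\widehat{a})<2^{-\lth{a}+2}$. Then set
\[
B=\{a \mid \lambda(a)=1 \land \lth{a}\geq \omega(\lth{a})\}
\]
(after arranging the modulus suitably). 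Positivity of each $f(\alpha)$ shows $B$ is a bar, and a uniform bound $N$ gives $\inf f \geq 2^{-N}$.

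For $\POS \Rightarrow \MINB$, again reduce to $\Cantor$ with $\inf f = 0$. Consider $g(\alpha,\beta) = \max\{f(\alpha),f(\beta), 2^{-m}\dotminus d(\alpha,\beta)\}$-style functions on $\Cantor\times\Cantor$. The at-most-one-minimum hypothesis shows that $h(\alpha,\beta)=\max\{f(\alpha),f(\beta)\} + \max\{0, 2^{-m}-d(\alpha,\beta)\}$ is positive whenever the distance is at least $2^{-m}$. Applying $\POS$ to $\max\{f(\alpha),f(\beta)\}$ on the compact set $\{d(\alpha,\beta)\geq 2^{-m}\}$ (compact after locating by levels) gives $\delta_m>0$. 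This means that the sets $\{f<\delta_m\}$ have diameter $<2^{-m}$. Choosing by $\SigmaOneAC$ points $\alpha_m$ with $f(\alpha_m)<\delta_m$ (with the $\delta_m$ decreasing), we get a Cauchy sequence whose limit is the minimum point.
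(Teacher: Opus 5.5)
Your steps $\MINB\Rightarrow\CITB$ and $\CITB\Rightarrow\WKLB$ are the paper's and are fine. The genuine gap is $\WKLB\Rightarrow\FAN$. The tree you settle on contains $0^k$ for every $k$, so $0^\omega$ is an infinite path outright and, as you say yourself, the only one. $\WKLB$ then returns a path you already have, and there is no point where it ``branches off $0^\omega$'' from which a bound could be read. (The side condition $\lth{a}\le$ ``first empty level of $T$'' is vacuous for $a\in T$.)

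The missing idea is that the extra nodes must appear only \emph{after} the unbarred tree has an empty level. Then the path supplied by $\WKLB$, which the bar forces into $B$, certifies uniformity. For instance, take $B$ monotone with $\nil\notin B$, let $U=\{a\in\Bin\mid a\notin B\}$ with levels $U_m$ (emptiness of $U_m$ is decidable), and put $a\in T'$ iff either $a\in U$, or there is $m\le\lth{a}$ with $U_m=\emptyset$ such that, for the least such $m$, $\overline{a}(m-1)$ is the lexicographically least element of $U_{m-1}$ and $a_j=0$ for $m-1\le j<\lth{a}$. This tree has the required properties:
\begin{itemize}
\item Bounded search shows $T'$ is an infinite detachable tree.
\item $T'\setminus U$ has at most one node of each length. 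So two apart sequences cannot both stay in $T'$ past their first difference and past the levels where the bar catches them; hence $T'$ has at most one path.
\item If $\gamma$ is a path with $\overline{\gamma}N\in B$, then $\overline{\gamma}N\in T'\setminus U$ forces $U_m=\emptyset$ for some $m\le N$.
\end{itemize}
Attaching the extra branch to a node of $U$, rather than to the zero spine, matters, because $0^m$ may enter $B$ long before $B$ is uniform. The tree displayed in the paper has exactly this problem: for $B$ generated by $\langle 0\rangle$ and all of level $2$, it has no node of length $2$.

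Your other departure is closing the cycle with $\POS\Rightarrow\MINB$. The paper instead proves $\POS\Rightarrow\WKLB$ (positivity on $\widehat{Y_n}$ gives \eqref{C:POT-WKLB}, then a Cauchy sequence of nodes) and gets $\MINB$ from $\WKLB$ via the tree \eqref{T:WKL-MIN}. Your argument is the analytic counterpart of the paper's $\POS\Rightarrow\WKLB$ and works, but do not start by reducing to $\Cantor$. For a uniformly continuous surjection $\phi\colon\Cantor\to X$, the function $f\circ\phi$ need not have at most one minimum point: binary expansion onto $[0,1]$ with $f(x)=\lvert x-\tfrac12\rvert$ gives the two minimum points $\langle 1,0,0,\dots\rangle$ and $\langle 0,1,1,\dots\rangle$. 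Instead, run the argument on $X$ itself with the first function you wrote, $g_m(x,y)=\max\{f(x),f(y),2^{-m}\dotminus d(x,y)\}$. It is positive on the compact space $X\times X$: either $d(x,y)<2^{-m}$, or $d(x,y)>2^{-(m+1)}$, and then $x\neq y$ gives $f(x)>0$ or $f(y)>0$. So $\POS$ and $\SigmaOneAC$ give decreasing $\delta_m>0$ such that $f(x),f(y)<\delta_m$ implies $d(x,y)<2^{-m}$. Moreover $\delta_m\le\inf g_m\le 2^{-m}$ automatically, since $g_m(x,x)=\max\{f(x),2^{-m}\}$ and $\inf f=0$. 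No compactness of $\{d\ge 2^{-m}\}$ is needed. Done this way, your route handles an arbitrary compact $X$ directly. The paper's $\WKLB\Rightarrow\MINB$, by contrast, checks uniqueness only for $f$ on $\Cantor$, after precisely the reduction that does not preserve it.

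Finally, $\FAN\Rightarrow\POS$ is the paper's argument, but your bar is mis-indexed. With the normalisation $\omega(n)\ge n$, the clause $\lth{a}\ge\omega(\lth{a})$ forces $\omega(\lth{a})=\lth{a}$, so $B$ need not be a bar. Either decouple the precision $k$ from the length, requiring only $\omega(k)\le\lth{a}$, or do as the paper does and let $\lambda$ decide bounds on $\inf f|_{\widehat{T_a}}$ as in \eqref{def:lambda}. That controls $f$ on the whole cylinder and gives $\inf f\ge 2^{-(N+1)}$ at once.
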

\begin{proof}
   $\WKL! \Rightarrow \MIN!$:
   The proof is identical to that of the direction
   ($\WKL \to \MIN$) of Theorem \ref{thm:WKL_MIN},
   except that one needs to show that the tree
   $T$ defined by \eqref{T:WKL-MIN} has at most one infinite path if
   the function $f \colon \Cantor \to \Real$ has at most one minimum point.

   To see this, suppose that $f$ has at most one minimum point.
   Let $\alpha,\beta \in \Cantor$ be such that $\alpha \neq \beta$.
   Then either $f(\alpha) > 0$ or $f(\beta) > 0$.

   If $f(\alpha) > 0$, then there exists $n \in \Nat$
   such that $f(\alpha) > 2^{-n}$. Then, for any $\beta \in
   \overline{\alpha}\omega(n+1)$, we have
   \[
     f(\beta) > f(\alpha) - 2^{-(n+1)} > 2^{-(n+1)} \geq
     2^{-\omega(n+1)}.
   \]
   Thus $\inf f|_{\widehat{T_{\overline{\alpha}\omega(n+1)}}} \geq
   2^{-\omega(n+1)}$, and so $\lambda(\overline{\alpha}\omega(n+1)) =
   0$ is impossible. Hence $\lambda(\overline{\alpha}\omega(n+1)) =
   1$, i.e., $\overline{\alpha}\omega(n+1) \notin T$.
   Similarly, if $f(\beta) > 0$, then there exists $n$
   such that $\overline{\beta}n \notin T$.
   \medskip

   $\MIN! \Rightarrow \CIT!$:
   Again, the proof is identical to that of the direction
   ($\MIN \to \CIT$) of Theorem \ref{thm:MIN_CIT},
   except that one needs to show that the function
   $f$ defined by \eqref{f:MIN-CIT} has at most one minimum point
   if the sequence $\langle F_i \rangle_{i \in \Nat}$
   has at most one point in common.
   But this is immediately seen from the fact that $f(x) > 0$ if and
   only if $\exists i \in \Nat \,d(x,F_i) > 0$.
   \medskip

   $\CIT! \Rightarrow \WKL!$:
   Again, the proof is identical to that of the direction
   ($\CIT \to \WKL$) of Theorem \ref{thm:MIN_CIT},
   except that one needs to show that
   the sequence $\langle F_i \rangle_{i \in \Nat}$
   defined by \eqref{F:CIT-MIN} has at most one point in common
   if the tree $T$ has at most one infinite path.
   But this is immediately seen from the fact that
   $d(\alpha,F_i) > 0$ if and only if $\overline{\alpha}i \notin T$.
   \medskip

   $\WKL! \Rightarrow \FAN$:
   The proof in \cite{BergerIshiharaFanUniqExistence} carries over to
   $\ELZero$. We recall the proof with a slight modification for the
   sake of completeness.

   Let $B \subseteq \Bin$ be a detachable bar. Without loss of
   generality, we may assume that $B$ is monotone, i.e., 
   $a \in B \land a \preceq b \imp b \in B$. Define a detachable
   subset $T \subseteq \Bin$ by 
   \[
     a \in T \defeqiv
       \forall m \leq |a|
       \left( \overline{a}m \in B \imp
         \forall b \in \Bin
         \left( |b| = m \imp b \in B \right)
         \land 
         a = \langle 0, \ldots, 0 \rangle
       \right),
   \]
   where $\langle 0, \ldots, 0 \rangle$ is a sequence of zeros.
   It is straightforward to see that $T$ is an infinite tree.
   We show that $T$ has at most one infinite path. Let $\alpha,\beta
   \in \Cantor$ be such that $\alpha \neq \beta$.
   Since $B$ is monotone bar, there exists 
   $n$ such that $\overline{\alpha}n, \overline{\beta}n \in B$
   and $\overline{\alpha}n \neq \overline{\beta}n$.
   If $\overline{\alpha}n, \overline{\beta}n \in T$, then
   $\overline{\alpha}n = \langle 0, \dots, 0 \rangle = \overline{\beta}
   n$, a contradiction. Thus either 
  $\overline{\alpha}n \notin T$ or $\overline{\beta}n \notin T$
  as required. By $\WKLB$, $T$ has an infinite path $\alpha$.
  Since $B$ is a bar, there exists $N$ such that $\overline{\alpha}N
  \in B$. Then, $N$ witnesses the uniformity of $B$.
   \medskip

   $\FAN \Rightarrow \POS$:
   As in the proof of $\WKL \Rightarrow \MIN$ in 
   Theorem~\ref{thm:WKL_MIN}, it suffices to prove a special case of $\POS$ 
   where the domain of a function is $\Cantor$.
   Assume $\FAN$, and let $f \colon \Cantor \to \Real$ be positive
   valued uniformly continuous function. 
   Let $\lambda$ be the function defined by \eqref{def:lambda}, and
   let
   \[
     B = \left\{ a \in \Bin \mid \lambda(a) = 1 \right\}.
   \]
   By an argument similar to that used in the proof of $\MINB \Rightarrow \CITB$,
   one can see that $B$ is a bar. By $\FAN$, there exists $N$ such
   that $\forall \alpha \in \Cantor \exists n \leq N \overline{\alpha}n \in B$.
   Then  $\inf f \geq 2^{-(N+1)}$.
   \medskip

   $\POS \Rightarrow \WKLB$:
   We essentially follow the proof of $\POS \Rightarrow \CITB$ in
   \cite{BergerIshiharaFanUniqExistence}.
   Assume $\POS$, and let $T$ be an infinite binary tree with at most
   one infinite path. 
   We show that
   \begin{equation}
     \label{C:POT-WKLB}
     \forall n \exists k \geq n
     \forall a,b \in T_k \left( \overline{a}n = \overline{b}n  \right),
   \end{equation}
   where $T_k$ is given as in \eqref{def:Tk}.
   Fix $n \in \Nat$, and let
   \begin{align*}
     Y_n &= \left\{ (a,b) \in \Bin \times \Bin \mid 
             |a| = |b| = n \land a \neq b \right\}, \\
     \widehat{Y_n} &= \left\{ (\alpha,\beta) \in \Cantor \times
     \Cantor \mid \exists (a,b) \in Y_n \left( \alpha \in a \land
   \beta \in b \right)\right\}.
   \end{align*}
   Then $\widehat{Y_n}$ is a closed located subset of $\Cantor \times
   \Cantor$. 
   Define a uniformly continuous $f \colon \widehat{Y_n} \to \Real$ by 
   \[
     f(\alpha,\beta) =
     \sum_{i = 0}^{\infty} 2^{-(i+1)} \max\left\{
       \chi_i(\overline{\alpha}i),
     \chi_i(\overline{\beta}i) \right\},
   \]
   where $\chi_i$ be the characteristic function of $\Bin \backslash
   T_i$.
   Since $T$ has at most one infinite path, $f$ is positive valued by
   the definition of $\widehat{Y_n}$.  Thus, there exists $k \geq n$
   such that $\inf f > 2^{-k}$.  We show that $k$ witnesses 
   \eqref{C:POT-WKLB}. To this end, let $a,b \in T_k$, and suppose that $\overline{a}n
   \neq \overline{b}n$. Then $(\widehat{a},\widehat{b}) \in
   \widehat{Y_n} $ and so $f(\widehat{a},\widehat{b}) > 2^{-k}$. But
   $a,b \in T_k$ implies $f(\widehat{a},\widehat{b}) \leq 2^{-k}$, a
   contradiction.  Hence $\overline{a}n = \overline{b}n$ as required.

   Since $T$ is infinite, by $\qfAC$, there exists 
   a sequence $\langle a_n \rangle_{n \in \Nat}$ such that $a_n \in T_n$
   for all $n \in \Nat$. Then, by \eqref{C:POT-WKLB}, 
   $\langle \widehat{a_n} \rangle_{n \in \Nat}$ is a Cauchy sequence
   in $\Cantor$, and hence, it converges to $\alpha$. Since 
   $a_n \in T_n$ for each $n \in \Nat$, we have $\forall n
   \left( \overline{\alpha}n \in T \right)$, i.e., $\alpha$ is an infinite path in
   $T$.
\end{proof}

\subsection*{Acknowledgements}
The work is supported by JSPS KAKENHI Grant Number JP23K03197
and JP21KK0045.
\bibliographystyle{abbrv}
\bibliography{$HOME/refs.bib}
\end{document}